\newcommand{\ver}{{\rm ver}}
\newcommand{\vo}{{\rm vol}}
\newtheorem*{corollary*}{Corollary}
\newtheorem{theor}{Theorem}
\newtheorem{corol}{Corollary}
\begin{document}
\title{Interpolation by Linear Functions \\
on an $n$-Dimensional Ball
}
\author{Mikhail Nevskii\footnote{Department of Mathematics,
              P.G.~Demidov Yaroslavl State University, Sovetskaya str., 14, Yaroslavl, 150003, Russia 
              orcid.org/0000-0002-6392-7618 
              mnevsk55@yandex.ru}        \and
        Alexey Ukhalov \footnote{ Department of Mathematics, P.G.~Demidov Yaroslavl State University, Sovetskaya str., 14, Yaroslavl, 150003, Russia 
              orcid.org/0000-0001-6551-5118 
              alex-uhalov@yandex.ru}
              }
\date{May 7, 2019}
\maketitle

\begin{abstract}
By $B=B(x^{(0)};R)$ we denote
the Euclidean ball in ${\mathbb R}^n$ given by the inequality
$\|x-x^{(0)}\|\leq R$. Here $x^{(0)}\in{\mathbb R}^n, R>0$, 
$\|x\|:=\left(\sum_{i=1}^n x_i^2\right)^{1/2}$. 
We mean by  
$C(B)$ the space of~continuous functions
$f:B\to{\mathbb R}$ with the norm
$\|f\|_{C(B)}:=\max_{x\in B}|f(x)|$  and by
$\Pi_1\left({\mathbb R}^n\right)$ the set of polynomials in
$n$ variables of degree $\leq 1$, i.e., linear functions on ${\mathbb R}^n$.
Let $x^{(1)}, \ldots, x^{(n+1)}$
be the vertices of
$n$-dimensional nondegenerate simplex  $S\subset B$.
The interpolation projector   
 $P:C(B)\to \Pi_1({\mathbb R}^n)$ corresponding to 
$S$ is defined by the equalities 
$Pf\left(x^{(j)}\right)=f\left(x^{(j)}\right).$ 
We obtain the formula to compute the norm of $P$ as an operator
from $C(B)$ into $C(B)$ via $x^{(0)}$, $R$ and coefficients of 
basic Lagrange polynomials of $S$.  
In more details we study the case when $S$ is a regular
simplex inscribed into $B_n=B(0,1)$.
\medskip 

\noindent Keywords: $n$-dimensional simplex, $n$-dimensional ball, linear interpolation, projector, norm
\end{abstract}

\section{Introduction}\label{nev_s1}
We begin with basic definitions and notations.
Everywhere further $n\in{\mathbb N}.$ An element $x\in{\mathbb R}^n$ 
we present in the form $x=(x_1,\ldots,x_n).$ 
Denote by $e_1$, $\ldots$, $e_n$ the standard basis in ${\mathbb R}^n$.
Let us put $$\|x\|:=\sqrt{(x,x)}=\left(\sum\limits_{i=1}^n x_i^2\right)^{1/2},$$ 
$$B\left(x^{(0)};R\right):=\{x\in{\mathbb R}^n: \|x-x^{(0)}\|\leq R \} 
\quad \left(x^{(0)}\in {\mathbb R}^n,
R>0\right),$$ \ 
$$B_n:=B(0;1), \quad 
Q_n:=[0,1]^n, \quad
Q_n^\prime:=[-1,1]^n.$$

Everywhere further $S$ is a nondegenerate simplex in ${\mathbb R}^n$. 
By $c(S)$ denote the center of gravity of $S$. 
The notation $\tau S$ means the result of the homotety of $S$ with homothetic center $c(S)$
and  coefficient $\tau$. We denote $\ver(G)$ the set of vertices of the convex polyhedron $G$.
Let  $\Omega \subset{\mathbb R}^n$ be the closed bounded set. Consider the value
$\xi(\Omega;S):=\min \{\sigma\geq 1: \Omega\subset \sigma S\}$.
We call this number {\it the absorption index of $S$ with respect to $\Omega$}.
             
Let $x^{(j)}=\left(x_1^{(j)},\ldots,x_n^{(j)}\right),$ $1\leq j\leq n+1$ be 
the vertices of the simplex $S$.
{\it The vertex matrix}
$${\bf A} :=
\left( \begin{array}{cccc}
x_1^{(1)}&\ldots&x_n^{(1)}&1\\
x_1^{(2)}&\ldots&x_n^{(2)}&1\\
\vdots&\vdots&\vdots&\vdots\\
x_1^{(n+1)}&\ldots&x_n^{(n+1)}&1\\
\end{array}
\right)$$
is non-degenerate. Put ${\bf A}^{-1}$ $=(l_{ij})$. 
We call linear polinomials 
$\lambda_j(x)=
l_{1j}x_1+\ldots+
l_{nj}x_n+l_{n+1,j}$
{\it the basic Lagrange polynomials of S}.
Note that the coefficients of $\lambda_j$ form the columns of  ${\bf A}^{-1}$.
These polinomials have the property 
$\lambda_j\left(x^{(k)}\right)$ $=$ $\delta_j^k$.
The numbers $\lambda_j(x)$  are  barycentric coordinates of the point 
 $x\in{\mathbb R}^n$ with respect to $S$. 
 The simplex  $S$ is given by the system of linear inequalities
   $\lambda_j(x)\geq 0$. 
More on polinomials $\lambda_j$ see in [4, Ch.\,1].

Let us introduce the value 
$$\xi_n(\Omega):=\min \{ \xi(\Omega;S): \,
S \mbox{ --- $n$-dimensional simplex,} \,
\ver (S)\subset \Omega, \, \vo(S)\ne 0\},$$
$\xi_n:=\xi_n(Q_n).$
Various estimates of numbers $\xi_n$ were obtained in papers
[2],
[3],
[5],
[6],
[7],
[8],
[13] and in
monograph
[4]. 
Here let us note that $n\leq \xi_n< n+1$. 
The exact values of $\xi_n$ are known only for
$n=2$, $n=5$, $n=9$, and for infinite numer of such $n$ that there 
exists a Hadamard matrix of order $n+1$.
In all these cases, except $n=2$, holds $\xi_n=n,$ while $\xi_2=\frac{3\sqrt{5}}{5}+1=2.34\ldots$
Up to present time  $n=2$ is the only even $n$ for which the exact value of $\xi_n$ is known.
The number $\xi_n$ for a Euclidean ball, i.\,e., exact value of
$\xi_n(B_n)$, was found in [10]. 
In particular, for any  $S\subset B_n$ holds $\xi(B_n;S)\geq n$ with the equality only 
for a regular simplex $S$ inscribed into the border sphere.
Further, in such a case, we say that the simplex is inscribed into the ball.
Therefore, always $\xi_n(B_n)=n$.

Denote by $C(\Omega)$ the space of continuous functions  
$f:\Omega\to{\mathbb R}$ with the uniform 
norm $$\|f\|_{C(\Omega)}:=\max\limits_{x\in Q_n}|f(x)|.$$
By $\Pi_1\left({\mathbb R}^n\right)$ we mean the set of polynomials in $n$ variables 
of degree $\leq 1$ (or the set of linear functions in $n$ variables).

Let $x^{(j)}\in\Omega$, $1\leq j\leq n+1,$ be the vertices of  $n$-dimensional  non-degenerate
simplex $S$.  
We say that interpolation projector $P:C(\Omega)\to \Pi_1({\mathbb R}^n)$ 
corresponds to $S$ if nodes of interpolation 
coincide with the points $x^{(j)}$.
This projector is determined by the equalities
$Pf\left(x^{(j)}\right)=
f_j:=
f\left(x^{(j)}\right).$
The analogue of the Lagrange interpolation formula 
\begin{equation}\label{interp_Lagrange_formula}
Pf(x)=\sum\limits_{j=1}^{n+1}
f\left(x^{(j)}\right)\lambda_j(x) 
\end{equation}
holds true.
Denote by  $\|P\|_\Omega$ the norm of $P$ as an operator from $C(\Omega)$ into $C(\Omega)$. 
It follows from (\ref{interp_Lagrange_formula}) that
$$\|P\|_\Omega=\sup_{\|f\|_{C(\Omega)}=1} \|Pf\|_{C(\Omega)}=
\sup_{-1\leq f_j\leq 1} \max_{x\in\Omega} \sum_{j=1}^{n+1} f_j\lambda_j(x).$$
The expression $\sum f_j\lambda_j(x)$ is linear in $x$ and in
$f_1,\ldots,f_{n+1}$, therefore,
\begin{equation}\label{norm_P_intro_cepochka}
\|P\|_\Omega=\max_{f_j=\pm 1} \max_{x\in\Omega} \sum_{j=1}^{n+1}
f_j\lambda_j(x)
=\max_{x\in\Omega}\max_{f_j=\pm 1} \sum_{j=1}^{n+1}
f_j\lambda_j(x)
=\max_{x\in\Omega}\sum_{j=1}^{n+1}
|\lambda_j(x)|.
\end{equation}
If $\Omega$ is a convex polyhedron in ${\mathbb R}^n$ 
(for instance, if $\Omega=Q _n)$, we have simpler equality 
$$\|P\|_\Omega= \max_{x\in\ver(\Omega)}\sum_{j=1}^{n+1}
|\lambda_j(x)|.
$$

We say that the point $x\in\Omega$ is an 
{\it $1$-point of $\Omega$ with respect to $S,$} if for projector 
$P:C(\Omega)\to\Pi_1\left({\mathbb R}^n\right)$ 
with nodes in vertices of $S$ holds $\|P\|=\sum  \lambda_j(x)|$ 
and only one of numbers  $\lambda_j(x)$ is negative.
In [1, Theorem 2.1]
it was established that for any 
$P$ and corresponding  $S$ holds
\begin{equation}\label{nev_ksi_P_ineq}
\frac{n+1}{2n}\Bigl( \|P\|_\Omega-1\Bigr)+1\leq 
\xi(\Omega;S) \leq
\frac{n+1}{2}\Bigl( \|P\|_\Omega-1\Bigr)+1.
\end{equation}
If there exists an $1$-point in $\Omega$ with respect to $S$, then
the last inequality in (\ref{nev_ksi_P_ineq}) becomes an equality.
(The last statement was proved in [1]
in the equivalent form; the notion of $1$-vertex of the cube was introduced later
in [2].)

By $\theta_n(\Omega)$ denote the minimal value  $\|P\|_\Omega$
where  $x^{(j)}\in \Omega$.  It was proved by the first author that for numbers 
$\theta_n:=\theta_n(Q_n)$  the  asymptotic  equality $\theta_n\asymp \sqrt{n}$ takes place
(various estimates were systematized in [4]). 
Later on some estimates were improved by
M.\,V.~Nevskii and A.\,Yu.~Ukhalov, and by students of the second author (see
[5],
[6],
[9] and references in these works). 
By the present time the exact values of  $\theta_n$ are known only for $n=1, 2, 3$, and $7$. 
Specifically, $\theta_1=1,$ $\theta_2=\frac{2\sqrt{5}}{5}+1=1.89\ldots$, $\theta_3=2$,
$\theta_7=\frac{5}{2}$. For all these cases $$\xi_n=
\frac{n+1}{2}\Bigl( \theta_n-1\Bigr)+1.$$

In this paper we consider the case $\Omega=B_n$. 
Since for $S\subset B_n$ holds $\xi(B_n;S)\geq n,$ then for the 
corresponding projector $P$ the last inequality in (\ref{nev_ksi_P_ineq}) gives
\begin{equation}\label{3_minus_frac_bound_for_P}
\|P\|_{B_n}\geq 3 - \frac{4}{n+1}.
\end{equation}
So, always  $\theta_n(B_n)\geq 3-\frac{4}{n+1}$.

If for some interpolation projector
$P:C(B_n)\to\Pi_1\left({\mathbb R}^n\right)$ in  (\ref{3_minus_frac_bound_for_P})
takes place the equality, then the corresponding simplex $S$ is regular and inscribed into $B_n$. 
Indeed, in this case 
$$\frac{n+1}{2}\Bigl( \|P\|_{B_n}-1\Bigr)+1= n,$$
and  
(\ref{nev_ksi_P_ineq})  gives $\xi(B_n;S)\leq n.$ 
Hence $\xi(B_n;S)=n$. This is possible only for a regular simplex inscribed into $B_n$.  
Therefore, it follows from the equality $\theta_n(B_n)= 3-\frac{4}{n+1}$ that the simplex 
corresponding to the minimal projector has the described form.

If $S$ is a regular simplex inscribed into $B_n$ and there exists an $1$-point of $B_n$ with respect to $S$,
then the relation (\ref{3_minus_frac_bound_for_P}) is an equality.
When $S$ is not a regular or not inscribed into the ball, (\ref{3_minus_frac_bound_for_P}) is a strict inequality.
As it will be demonstrated, an $1$-point of the ball with respect to an inscribed regular simplex
exists only for $1\leq n \leq 4$. Beginning from  $n=5$ the equality 
in (\ref{3_minus_frac_bound_for_P}) doesn't hold true for all  $n$ and $P$,
therefore, for $n\geq 5$ holds $\theta_n(B_n)>3-\frac{4}{n+1}$.

This paper includes proofs of this and some other results for linear interpolation on the ball.
In particular we will obtain the exact formula for the norm of the interpolation projector 
with nodes in vertices of an inscribed regular simplex. Utilizing this formula we will 
show that $\theta_n(B_n)\leq \sqrt{n+1}$ and the equality here is possible 
only for $n=m^2-1$, $m\geq 2$.

\section{Reduction in the problem of minimal projector}\label{nev_s2}
An interpolation projector $P:C(B_n)\to \Pi_1({\mathbb R}^n)$ is called a {\it minimal projector}
if  $\|P\|_{B_n}=\theta_n(B_n)$. 
The existence of a minimal projector follows from continuity of $\|P\|$ as 
a function of nodes that is defined on closed bounded subset of 
${\mathbb R}^m,$ $m=n(n+1)$, given by the conditions
$x^{(j)}\in B_n,$ $\det {\bf A}\geq \varepsilon_n>0$. 
Let us show that the minimal projector can be determined by the set of 
nodes belonging to the bounding sphere $\|x\|=1$.

Assume $P$  is an interpolation projector with the nodes 
$x^{(j)}\in B_n$ and $\lambda_j$ are the basic Lagrange polynomials of the 
corresponding simplex $S$. 
Suppose not all $x^{(j)}$ belong to the bounding sphere.
We will construct a projector $P^\prime$ such that its number of nodes on the border of $B_n$ is greater by 1 than for $P$ and such that  $\|P^\prime\|\leq \|P\|$. 
Assume the node $x^{(i)}$ of $P$ lies strictly inside $B_n$. 
Let $z$ be the ortogonal projection of $x^{(i)}$ on the hyperplane $\lambda_i(x)=0$ and $y$ be the point of intersection of the sphere and the straight line $(zx^{(i)})$ in the same halfspace as $x^{(i)}$.
Hence
$$0<\sigma:=\frac{\|y-x^{(i)}\|}{\|z-x^{(i)}\|}<1.$$
Denote by $T$  the compression of ${\mathbb R}^n$ with coefficient $\sigma$  
with respect to hyperplane $\lambda_i(x)=0$ in the orthogonal direction. 
Consider the projector $P_1$ with the same nodes but acting on ellipsoid $T(B_n)$.  
The node $x^{(i)}$ belongs to the border of  $T(B_n)$ and 
the rest of nodes belong to the hyperplane $\lambda_i(x)=0$. 
So, the number of nodes on the border of  $T(B_n)$ in comparision with the number of original 
nodes on the border of $B_n$ will be greater by 1.
Now, let us consider the inverse transformation $T^{-1}$.
This nondegenerate affine transformation maps the ellipsoid $T(B_n)$ into the ball $B_n$ and the 
simplex $S$ into the simplex $S^\prime=T^{-1}(S)$.
Obviously, the $i$th vertex of $S^\prime$ coincides with $y$ while all other 
vertices of $S^\prime$ are the same as those of $S$.
 
Let $P^\prime$ be the interpolation projector with nodes in vertices of $S^\prime$ that again is being considered on $B_n$.
It remains to compare the norms of the projectors.
Since
$T(B_n)\subset B_n,$ we have
$$\|P_1\|_{T(B_n)}
=\max_{x\in T(B_n)}\sum_{j=1}^{n+1}
|\lambda_j(x)|\leq
\max_{x\in B_n}\sum_{j=1}^{n+1}
|\lambda_j(x)|=
\|P\|_{B_n}.$$
Because the spaces $C(T(B_n))$ and $C(B_n)$ are isometric, 
holds $\|P^\prime\|_{B_n}= \|P_1\|_{T(B_n)}$ .
Therefore,  $\|P^\prime\|_{B_n}\leq \|P\|_{B_n}.$ 

Applying the described procedure for necessary number of times, we can move all the nodes to the boundary of $B_n$ without increasing the projector norm.
It means that there exists a minimal projector with all the nodes 
on the sphere.
In particular, this property can be utilized for numerical minimization of the projector norm.

\section{The norm of projector for interpolation on a ball}\label{nev_s3}

Let $B=B(x^{(0)};R)$ and let $P:C(B)\to \Pi_1({\mathbb R}^n)$ be 
the interpolation projector with the nodes  $x^{(j)}\in B$. 
Suppose  $S$ is the simplex with vertices  $x^{(j)}$ and 
$\lambda_j(x)=l_{1j}x_1+ \ldots+l_{nj}x_n+l_{n+1,j}$ are 
the basic Lagrange polynomials of $S$. 
In this case we have
\begin{equation}\label{norm_P_B_Lagrange}
\|P\|_{B}=
\max_{x\in B}\sum_{j=1}^{n+1}
|\lambda_j(x)|
\end{equation}
(see (\ref{norm_P_intro_cepochka}) for $\Omega=B$). 
Let us obtain another formula for the projector norm.

 \begin{theor}\label{theor_P_norm_max_f_j} The following equality holds:
$$\|P\|_B=
\max\limits_{f_j=\pm 1} \left[
R \left(\sum_{i=1}^n\left(\sum_{j=1}^{n+1} f_jl_{ij}\right)^2\right)^{1/2}
+\left|\sum_{j=1}^{n+1}f_j\left(\sum_{i=1}^n l_{ij}x_i^{(0)}+l_{n+1,j}\right)\right|
\right]=$$
\begin{equation}\label{norm_P_B_formula}
=
\max\limits_{f_j=\pm 1} \left[
R \left(\sum_{i=1}^n\left(\sum_{j=1}^{n+1} f_jl_{ij}\right)^2\right)^{1/2}
+\left|\sum_{j=1}^{n+1}f_j\lambda_j(x^{(0)})\right|
\right].
\end{equation}
\end{theor}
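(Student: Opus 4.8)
The plan is to start from the known expression (\ref{norm_P_B_Lagrange}), namely $\|P\|_B=\max_{x\in B}\sum_{j=1}^{n+1}|\lambda_j(x)|$, and to convert the inner sum of absolute values into a maximum over sign patterns. Since for any real numbers $a_j$ one has $\sum_j|a_j|=\max_{f_j=\pm 1}\sum_j f_ja_j$, I can write
$$\|P\|_B=\max_{x\in B}\max_{f_j=\pm 1}\sum_{j=1}^{n+1}f_j\lambda_j(x)=\max_{f_j=\pm 1}\max_{x\in B}\sum_{j=1}^{n+1}f_j\lambda_j(x),$$
the interchange of the two maxima being legitimate because both index sets are finite (the sign patterns number $2^{n+1}$, and $B$ is compact). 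This reduces the problem to evaluating, for each fixed choice of signs $f_j=\pm 1$, the maximum of the affine function $x\mapsto\sum_j f_j\lambda_j(x)$ over the ball $B$.

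Second, I would compute that inner maximum explicitly. Writing $\lambda_j(x)=\sum_{i=1}^n l_{ij}x_i+l_{n+1,j}$ and collecting terms, the function becomes $g(x)=(a,x)+b$ with linear part $a=(a_1,\ldots,a_n)$, $a_i=\sum_{j=1}^{n+1}f_jl_{ij}$, and constant $b=\sum_{j=1}^{n+1}f_jl_{n+1,j}$. Parametrizing the ball as $x=x^{(0)}+Ru$ with $\|u\|\le 1$ gives $g(x)=g(x^{(0)})+R(a,u)$; by Cauchy--Schwarz the maximum over $\|u\|\le 1$ equals $\|a\|$, attained at $u=a/\|a\|$ (and when $a=0$ the maximum is simply $g(x^{(0)})$, consistent with a vanishing radical). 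Hence
$$\max_{x\in B}\sum_{j=1}^{n+1}f_j\lambda_j(x)=R\|a\|+g(x^{(0)})=R\left(\sum_{i=1}^n\left(\sum_{j=1}^{n+1}f_jl_{ij}\right)^2\right)^{1/2}+\sum_{j=1}^{n+1}f_j\lambda_j(x^{(0)}),$$
where I used $g(x^{(0)})=\sum_j f_j\lambda_j(x^{(0)})$ and $\|a\|^2=\sum_i(\sum_j f_jl_{ij})^2$.

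Third, taking the maximum over all sign patterns yields the first displayed expression in the theorem, but with $\sum_j f_j\lambda_j(x^{(0)})$ in place of its absolute value. To upgrade this to the stated formula I would invoke the symmetry $f_j\mapsto -f_j$: the radical term $R\|a\|$ is invariant under a simultaneous flip of all signs (the inner sum enters only squared), whereas $\sum_j f_j\lambda_j(x^{(0)})$ changes sign. Thus for each sign pattern and its negative, the larger of the two values of the bracket equals $R\|a\|+\bigl|\sum_j f_j\lambda_j(x^{(0)})\bigr|$, and since the outer maximum runs over all sign patterns the two maxima coincide. This produces (\ref{norm_P_B_formula}). I do not expect a genuine obstacle here; the only points requiring care are this final sign-symmetry argument that legitimizes inserting the absolute value, and the verification that the maximizing point $x^{(0)}+Ra/\|a\|$ lies in $B$ (it does, sitting on the bounding sphere).
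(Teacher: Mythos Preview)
Your proof is correct and follows essentially the same approach as the paper. The only cosmetic difference is where the absolute value enters: the paper inserts it at the outset, passing from $\max_{f_j}\max_{x}\sum f_j\lambda_j(x)$ to $\max_{f_j}\max_{x}\bigl|\sum f_j\lambda_j(x)\bigr|$ by the same sign-flip symmetry you invoke, and then maximizes $|\Lambda(x)|$ over $B$ by evaluating at the two antipodal boundary points $x_\pm=x^{(0)}\pm\frac{R}{\|v\|}v$; you instead maximize $\Lambda(x)$ first (via Cauchy--Schwarz, which is the same computation) and apply the sign-flip at the end.
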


\begin{proof}[Proof]  Let us write the double maximum in the formula for $ \| P \| _B $ in a different order than while obtaining  (\ref{norm_P_B_Lagrange}):
\begin{equation}\label{normP_modulus}
\|P\|_B=\max_{f_j=\pm 1} \max_{x\in B} \sum_{j=1}^{n+1}
f_j\lambda_j(x)=
\max_{f_j=\pm 1} \max_{x\in B} \left|\sum_{j=1}^{n+1}
f_j\lambda_j(x)\right|.
\end{equation}  
For fixed $f_j$, the function
$$\Lambda(x):=\sum_{j=1}^{n+1} f_j\lambda_j(x)=
\sum_{i=1}^n\left(\sum_{j=1}^{n+1}f_jl_{ij}\right) x_i
+\sum_{j=1}^{n+1}f_jl_{n+1,j}$$
is linear in х. 
Maximum of $|\Lambda(x)|$ on the ball $B=B(x^{(0)};R)$  
is being achieved at one of two points of intersection of the sphere
  $\|x-x^{(0)}\|=R$ 
with the straight line passing through $x^{(0)}$ in
directing of the vector 
$v=(v_1,\ldots,v_n),$ where $v_i=\sum\limits_{j=1}^{n+1}f_jl_{ij}$. 
These points are
$$x_+=x^{(0)}+\frac{R}{\|v\|}v, \quad 
x_-=x^{(0)}-\frac{R}{\|v\|}v.$$ 
The function $L(x):=\Lambda(x)-\Lambda(0)$ is additive and homogeneous, 
therefore,
$$L(x_+)=L\left(x^{(0)}+\frac{R}{\|v\|}v\right)=
L\left(x^{(0)}\right)+
\frac{R}{\|v\|}L(v),$$
$$\Lambda(x_+)=
L(x_+)+\Lambda(0)=\Lambda\left(x^{(0)}\right)+ \frac{R}{\|v\|}
\Bigl[\Lambda(v)-\Lambda(0)\Bigr].$$
Note that
$$\Lambda(v)-\Lambda(0)=
\sum_{i=1}^n\left(\sum_{j=1}^{n+1}f_jl_{ij}\right) v_i= 
\sum_{i=1}^n\left(\sum_{j=1}^{n+1}f_jl_{ij}\right)^2=\|v\|^2,$$
hence,
$\Lambda(x_+)=\Lambda\left(x^{(0)}\right)+ R\|v\|.$ 
Analogously, 
$\Lambda(x_-)=\Lambda\left(x^{(0)}\right)-R\|v\|.$ 
Thus,
$$\max_{x\in B} |\Lambda(x)|=\max(|\Lambda(x_+)|,|\Lambda(x_-)|)=
R\|v\|+ 
\left|\Lambda\left(x^{(0)}\right)\right|.$$
Now, the equality  
(\ref{normP_modulus}) gives
$$\|P\|_B=\max_{f_j=\pm 1} \max_{x\in B} |\Lambda(x)|=
\max_{f_j=\pm 1} \left[ R\|v\|+ 
\left|\Lambda\left(x^{(0)}\right)\right|\right].$$
Taking into account that 
$$\|v\|=
\left(\sum_{i=1}^n\left(\sum_{j=1}^{n+1} f_jl_{ij}\right)^2\right)^{1/2},$$
we obtain
$$\Lambda\left(x^{(0)}\right)=
\sum_{j=1}^{n+1}f_j\lambda_j(x^{(0)})=
\sum_{j=1}^{n+1}f_j\left(\sum_{i=1}^n l_{ij}x_i^{(0)}+l_{n+1,j}\right).$$
The theorem is proved. \end{proof}

In the case when the center of gravity of the simplex coincides with the center of the ball formula (\ref{norm_P_B_formula}) becomes more simple.

\begin{corol}\label{corol_norm_P_c_eq_x0}
If $c(S)=x^{(0)}$, then
\begin{equation}\label{norm_P_B_formula_c_eq_x0}
\|P\|_B=
\max\limits_{f_j=\pm 1} \left[
R \left(\sum_{i=1}^n\left(\sum_{j=1}^{n+1} f_jl_{ij}\right)^2\right)^{1/2}
+\frac{1}{n+1}\left|\sum_{j=1}^{n+1}f_j\right|
\right].
\end{equation}
\end{corol}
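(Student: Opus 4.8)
The plan is to start from the general formula (\ref{norm_P_B_formula}) established in Theorem~\ref{theor_P_norm_max_f_j} and simplify its second summand under the hypothesis $c(S)=x^{(0)}$. The only quantity that changes between the general case and the special case is the term $\left|\sum_{j=1}^{n+1} f_j\lambda_j(x^{(0)})\right|$, so the entire task reduces to evaluating the barycentric coordinates $\lambda_j(x^{(0)})$ of the ball's center. The key observation is that when $x^{(0)}$ equals the centroid $c(S)$, these barycentric coordinates take a particularly simple common value.

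First I would recall that the center of gravity of the simplex $S$ is $c(S)=\frac{1}{n+1}\sum_{k=1}^{n+1} x^{(k)}$, and that the barycentric coordinates of any point are the values of the basic Lagrange polynomials $\lambda_j$, which satisfy $\lambda_j(x^{(k)})=\delta_j^k$. Since each $\lambda_j$ is an affine function and barycentric coordinates of an affine combination of the vertices are the corresponding coefficients, I would compute
$$\lambda_j(c(S))=\lambda_j\!\left(\frac{1}{n+1}\sum_{k=1}^{n+1} x^{(k)}\right)=\frac{1}{n+1}\sum_{k=1}^{n+1}\lambda_j(x^{(k)})=\frac{1}{n+1}\sum_{k=1}^{n+1}\delta_j^k=\frac{1}{n+1}.$$
Thus under the hypothesis $x^{(0)}=c(S)$ every barycentric coordinate of $x^{(0)}$ equals $\frac{1}{n+1}$.

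Substituting this into the second term of (\ref{norm_P_B_formula}) gives
$$\left|\sum_{j=1}^{n+1} f_j\lambda_j(x^{(0)})\right|=\left|\sum_{j=1}^{n+1} f_j\cdot\frac{1}{n+1}\right|=\frac{1}{n+1}\left|\sum_{j=1}^{n+1} f_j\right|,$$
while the first term involving $R\|v\|$ is unchanged. Plugging this back into (\ref{norm_P_B_formula}) yields exactly (\ref{norm_P_B_formula_c_eq_x0}), completing the argument.

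I do not anticipate a serious obstacle here: the corollary is a direct specialization and the only nontrivial ingredient is the identity $\lambda_j(c(S))=\frac{1}{n+1}$, which follows from the affinity of $\lambda_j$ together with $\lambda_j(x^{(k)})=\delta_j^k$. The one point that warrants care is justifying that the value of an affine function at an affine combination of points equals the same affine combination of its values; this is immediate from additivity and homogeneity of the linear part of $\lambda_j$ (the same decomposition $\Lambda=L+\Lambda(0)$ into a homogeneous part plus a constant that was used in the proof of Theorem~\ref{theor_P_norm_max_f_j} applies verbatim to each $\lambda_j$). Everything else is routine substitution.
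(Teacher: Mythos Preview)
Your proof is correct and follows essentially the same approach as the paper: both observe that $\lambda_j(x^{(0)})=\lambda_j(c(S))=\frac{1}{n+1}$ and substitute this into the second summand of~(\ref{norm_P_B_formula}). Your version merely supplies the extra line of justification for $\lambda_j(c(S))=\frac{1}{n+1}$ via affinity and $\lambda_j(x^{(k)})=\delta_j^k$, which the paper leaves implicit.
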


\begin{proof}[Proof] The numbers $\lambda_j(x^{(0)})$ are the barycentric coordinates of  $x^{(0)}$ with respect to $S$. 
Since,
$$x^{(0)}=c(S)=\frac{1}{n+1}\sum_{j=1}^{n+1} x^{(j)}$$
we have $\lambda_j(x^{(0)})=\frac{1}{n+1}$ 
for all $j$. Hence, we obtain 
(\ref{norm_P_B_formula_c_eq_x0}) from
(\ref{norm_P_B_formula}).
\end{proof}

\section{The projector norm for a regular simplex \\inscribed into the ball}\label{nev_s4}

Suppose $S$ is a regular simplex inscribed into the $n$-dimensional ball
$B=B(x^{(0)};R)$ and
$P:C(B)\to \Pi_1({\mathbb R}^n)$ is the corresponding  interpolation projector.
Clearly, $\|P\|_B$ does not depend on the center $x^{(0)}$ and 
the radius $R$ of the ball and  on the choice of a regular simplex 
inscribed into that ball. 
In other words, $\|P\|_B$ is a function of dimension $n$. 
In this section, we will obtain exact representation of this function 
and establish its estimates.

For $0\leq t\leq n+1$, consider the function
\begin{equation}\label{psi_function_modulus}
\psi(t):=\frac{2\sqrt{n}}{n+1}\Bigl(t(n+1-t)\Bigr)^{1/2}+
\left|1-\frac{2t}{n+1}\right|.
\end{equation}
Denote
$a:=\left\lfloor\frac{n+1}{2}-\frac{\sqrt{n+1}}{2}\right\rfloor,$
where $\lfloor s \rfloor$ is the integer part of $s$.

 \begin{theor}\label{norm_P_regular} The following relations hold:
\begin{equation}\label{norm_P_reg_formula}
\|P\|_B=\max\{\psi(a),\psi(a+1)\},
\end{equation}
\begin{equation}\label{norm_P_reg_ineqs}
\sqrt{n}\leq \|P\|_B \leq \sqrt{n+1}.
\end{equation}
The equality $\|P\|_B=\sqrt{n}$ is true only for $n=1$.
The equality $\|P\|_B=\sqrt{n+1}$ holds if and only if 
$\sqrt{n+1}$ is an integer. 

\end{theor}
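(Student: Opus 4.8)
The plan is to specialise the formula of Corollary~1 to a regular inscribed simplex and to convert the maximum over sign patterns $f_j=\pm1$ into the maximisation of the single-variable function $\psi$ over integers. Since $\|P\|_B$ is invariant under translation and scaling, I would take $B=B_n=B(0;1)$. A regular simplex inscribed into $B_n$ has centroid at the origin, so $c(S)=x^{(0)}=0$ and Corollary~1 applies. The first step is to identify the Lagrange coefficients: writing the vertices as unit vectors with $\sum_j x^{(j)}=0$ forces the pairwise inner products $(x^{(j)},x^{(k)})=-1/n$ for $j\ne k$, and inserting the barycentric identity $x=\sum_j\lambda_j(x)x^{(j)}$ into $(x,x^{(k)})$ gives $\lambda_k(x)=\tfrac{n}{n+1}(x,x^{(k)})+\tfrac1{n+1}$; hence $l_{ik}=\tfrac{n}{n+1}x_i^{(k)}$ and $l_{n+1,k}=\tfrac1{n+1}$.

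The core of the reduction is the evaluation of $v_i=\sum_j f_jl_{ij}$. Setting $s:=\sum_j f_j$, I would compute $\|v\|^2=\bigl(\tfrac{n}{n+1}\bigr)^2\bigl\|\sum_j f_jx^{(j)}\bigr\|^2$ and expand $\bigl\|\sum_j f_jx^{(j)}\bigr\|^2$ using $\|x^{(j)}\|^2=1$ and $(x^{(j)},x^{(k)})=-1/n$, obtaining $\tfrac{(n+1)^2-s^2}{n}$ and therefore $\|v\|=\tfrac{\sqrt n}{n+1}\sqrt{(n+1)^2-s^2}$. If exactly $t$ of the $f_j$ equal $+1$, then $s=2t-(n+1)$, so $(n+1)^2-s^2=4t(n+1-t)$ and $\tfrac1{n+1}|s|=|1-\tfrac{2t}{n+1}|$; substituting into Corollary~1 (with $R=1$) gives precisely $\|v\|+\tfrac1{n+1}|s|=\psi(t)$, as defined in \eqref{psi_function_modulus}. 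Consequently $\|P\|_B=\max\{\psi(t):t\in\{0,1,\dots,n+1\}\}$, and everything reduces to maximising $\psi$ over integers.

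Next I would analyse $\psi$ as a function of a real argument on $[0,n+1]$. It is symmetric about $(n+1)/2$, with $\psi(0)=\psi(n+1)=1$ and $\psi\bigl(\tfrac{n+1}2\bigr)=\sqrt n$. Differentiating on $[0,(n+1)/2]$ and solving $\psi'=0$ reduces to $t(n+1-t)=\tfrac{n(n+1)}4$, whose relevant root is $t^\ast=\tfrac{n+1-\sqrt{n+1}}2$; a direct substitution yields the peak value $\psi(t^\ast)=\sqrt{n+1}$. Since $\psi'>0$ near $0$ and $t^\ast$ is the unique critical point in $(0,(n+1)/2)$, $\psi$ increases up to $t^\ast$ and decreases afterwards, so by symmetry $\sqrt{n+1}$ is its global maximum over $[0,n+1]$. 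Unimodality forces the integer maximum to be attained at $a=\lfloor t^\ast\rfloor$ or at $a+1$, which is exactly \eqref{norm_P_reg_formula}, and at once gives the upper bound in \eqref{norm_P_reg_ineqs}.

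The delicate part, which I expect to be the main obstacle, is the equality/bound analysis, since it depends on where the integers nearest $t^\ast$ land. For the lower bound I would show $a+1\in(t^\ast,(n+1)/2]$ (true once $\sqrt{n+1}\ge2$, with $n=1,2$ checked directly), whence $\psi(a+1)\ge\psi\bigl(\tfrac{n+1}2\bigr)=\sqrt n$ and so $\|P\|_B\ge\sqrt n$. For the first equality case, $\|P\|_B=\sqrt{n+1}$ forces the real maximum to be attained at an integer, i.e. $t^\ast\in\mathbb Z$; from $2t^\ast=(n+1)-\sqrt{n+1}$ one sees $t^\ast\in\mathbb Z\iff\sqrt{n+1}\in\mathbb Z$ (and when $\sqrt{n+1}=m$, $t^\ast=\binom m2$), and conversely an integer $t^\ast$ realises the peak, giving $\|P\|_B=\sqrt{n+1}$. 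For the second, $\|P\|_B=\sqrt n$ requires $\psi(a+1)=\sqrt n$, hence $a+1=(n+1)/2$ with $n$ odd and $a=(n-1)/2=\lfloor t^\ast\rfloor$; this last equality forces $\sqrt{n+1}\le2$, i.e. $n\le3$, leaving only $n\in\{1,3\}$, and a direct check shows that for $n=3$ one has $t^\ast=1\in\mathbb Z$ with $\psi(a)=\psi(1)=2>\sqrt3$, so the maximum is $\sqrt{n+1}$ there and only $n=1$ yields $\|P\|_B=\sqrt n$.
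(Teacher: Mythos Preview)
Your proof is correct and follows the same overall architecture as the paper's: apply Corollary~1 to a regular inscribed simplex, reduce the sign-maximum to the one-variable function $\psi$, and analyse $\psi$ to extract both the formula and the bounds. The one place where your route genuinely diverges is the computation of $\|v\|$. The paper fixes the concrete simplex with vertices $e_1,\dots,e_n$ and $\bigl(\tfrac{1-\sqrt{n+1}}{n},\dots,\tfrac{1-\sqrt{n+1}}{n}\bigr)$, writes out ${\bf A}^{-1}$ explicitly, and evaluates $W=(n+1)\sum_i(\sum_j f_jl_{ij})^2$ by a direct matrix calculation, eventually obtaining $W=4k(n+1-k)$. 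Your coordinate-free derivation---using only $\|x^{(j)}\|=1$, $\sum_j x^{(j)}=0$, and $(x^{(j)},x^{(k)})=-1/n$ to get $\lambda_k(x)=\tfrac{n}{n+1}(x,x^{(k)})+\tfrac1{n+1}$ and then $\|v\|^2=\tfrac{n}{(n+1)^2}\bigl((n+1)^2-s^2\bigr)$---is shorter and makes the appearance of $\psi$ transparent. What the paper's explicit choice buys is a ready-made simplex that is visibly permutation-invariant, so the reduction to a single parameter $k$ is immediate from the symmetry of the coordinates rather than from the abstract symmetry of $\psi$.

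Two small points of care. First, $\psi$ is not unimodal on $[0,n+1]$; it has peaks at $t^\ast$ and $n+1-t^\ast$ with a local minimum at $(n+1)/2$. Your ``unimodality'' argument is really unimodality on $[0,(n+1)/2]$ combined with the symmetry $\psi(n+1-t)=\psi(t)$; the paper states this explicitly via concavity of $\psi$ on each half-interval, and you should say so as well. Second, in the lower-bound step you want $a+1\le (n+1)/2$ so that $\psi(a+1)\ge\psi((n+1)/2)=\sqrt n$ follows from monotonicity on $[t^\ast,(n+1)/2]$; you correctly note this holds once $\sqrt{n+1}\ge 2$ and handle $n=1,2$ by hand. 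The paper argues the strict version $a+1<(n+1)/2$ for $n>3$ and checks $n=2,3$ separately, which is equivalent. Your treatment of the two equality cases (integrality of $t^\ast$ for the upper bound, and forcing $a+1=(n+1)/2$ then $n\le 3$ for the lower) matches the paper's reasoning.
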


\begin{proof}[Proof] We first prove (\ref{norm_P_reg_formula}).
If $n=1$, then
$\psi(t)=\sqrt{t(2-t)}+|1-t|$, $a=0$, $\psi(a)=\psi(a+1)=1.$
Since $\|P\|_B=1$, the equality (\ref{norm_P_reg_formula}) is true.

Let $n\geq 2$. 
Consider the simplex $S$
with vertices
$$x^{(1)}=e_1, \ \  \ldots, \ \ x^{(n)}=e_n, \quad
x^{(n+1)}=
\left(\frac{1-\sqrt{n+1}}{n},\ldots,\frac{1-\sqrt{n+1}}{n}\right).
$$

This simplex is regular because  the length of any edge is equal to 
$\sqrt{2}$. 
Simplex $S$ is inscribed into the ball
$B=B(x^{(0)};R)$, where
$$x^{(0)}=c(S)=\left(\frac{1-\sqrt{\frac{1}{n+1}}}{n},\ldots,
\frac{1-\sqrt{\frac{1}{n+1}}}{n}\right), 
\quad R=\sqrt{\frac{n}{n+1}}.$$ 
Note that the $(n+1)$th vertex of $S$ is obtained by shifting of the zero vertex of the simplex  $x_i\geq 0$, $\sum x_i\leq 1$ in the direction from the hyperplane $\sum x_i=1$. 
It is important that $S$ is invariant with respect to changing the order of coordinates.
It is enough to find $\|P\|_B$ for this simplex.

The corresponding matrices ${\bf A}$ and ${\bf A}^{-1}$ are
\begin{equation}\label{A_A_minus_1_for_regular_S}
{\bf A}=
\left( \begin{array}{ccccc}
1&0&\ldots&0&1\\
0&1&\ldots&0&1\\
\vdots&\vdots&\vdots&\vdots&\vdots\\
0&0&\ldots&1&1\\
-\tau&-\tau&\ldots&-\tau&1\\
\end{array}
\right), \quad
{\bf A}^{-1} =
\frac{1}{\sqrt{n+1}}
\left( \begin{array}{ccccc}
\sigma&-\tau&\ldots&-\tau&-1\\
-\tau&\sigma&\ldots&-\tau&-1\\
\vdots&\vdots&\vdots&\vdots&\vdots\\
-\tau&-\tau&\ldots&\sigma&-1\\
\tau&\tau&\ldots&\tau&1\\
\end{array}
\right).
\end{equation}
Here
\begin{equation}\label{sigma_tau_formulae}
\sigma:=\frac{(n-1)\sqrt{n+1}+1}{n}, \quad \tau:=\frac{\sqrt{n+1}-1}{n}.
\end{equation}
Since $c(S)=x^{(0)}$, for calculation $\|P\|_B$ we can use 
(\ref{norm_P_B_formula_c_eq_x0}) with $R=\sqrt{\frac{n}{n+1}}$:
$$
\|P\|_B=
\max\limits_{f_j=\pm 1} \left[
\sqrt{\frac{n}{n+1}} 
 \left(\sum_{i=1}^n\left(\sum_{j=1}^{n+1} f_jl_{ij}\right)^2\right)^{1/2}
+\frac{1}{n+1}\left|\sum_{j=1}^{n+1}f_j\right|
\right].
$$
Here $l_{ij}$ are the elements of ${\bf A}^{-1}$ (see (\ref{A_A_minus_1_for_regular_S})).

Denote by $k$ the number of $f_j$ equal to $-1$. 
Then the number of $f_j$ equal to $1$ is $n+1-k$. 
The simplex $S$ does not change with renumerating of coordinates, so,
we may assume that 
$f_1=\ldots=f_k=-1,$ $f_{k+1}=\ldots=f_{n+1}=1$. 
Since the function being maximized does not change when the signs of 
$f_j$ change simultaneously, we can consider only the interval $1 \leq k \leq \frac{n + 1}{2}$.
Thus,
\begin{equation}\label{P_norm_f_j_long}
\|P\|_B=
\max\limits_{1\leq k\leq \frac{n+1}{2}} \left[
\sqrt{\frac{n}{n+1}} 
 \left(\sum_{i=1}^n\left(\sum_{j=1}^{n+1} f_jl_{ij}\right)^2\right)^{1/2}
+\frac{n+1-2k}{n+1}
\right],
\end{equation}
where $f_j=-1$ for $1\leq j\leq k$ and $f_j=1$ for all other $j$. 
The number $n+1-2k$ is equal to the sum under the sign of absolute value.
Taking into account the multiplier $\frac{1}{\sqrt{n+1}}$ in the equality for 
${\bf A}^{-1}$, let us rewrite the value
$$W:=(n+1)\sum_{i=1}^{n}
\left(\sum_{j=1}^{n+1}f_jl_{ij}\right)^2.$$
Unilizing the explicit expressions for $l_{ij}$ we present this sum in the form
$$W=
(n+1)\sum_{i=1}^{k}\left(\sum_{j=1}^{n+1}f_jl_{ij}\right)^2+
(n+1)\sum_{i=k+1}^{n}\left(\sum_{j=1}^{n+1}f_jl_{ij}\right)^2
=W_1+W_2.$$
From 
(\ref{A_A_minus_1_for_regular_S}) and the described distribution of values $f_j$, we get
$$W_1=\sum_{i=1}^k (-\sigma+(k-1)\tau-(n-k)\tau-1)^2=k(2k\tau-\alpha)^2,$$
$$W_2=\sum_{i=k+1}^n (k\tau+\sigma-(n-1-k)\tau-1)^2=(n-k)(2k\tau+\beta)^2.$$
Here $\alpha=\sigma+(n+1)\tau+1$, $\beta=\sigma-(n-1)\tau-1$. 
It follows from (\ref{sigma_tau_formulae}) that $\alpha=2\sqrt{n+1},$ $\beta=0$, therefore,
$$W=
4k(k\tau-\sqrt{n+1})^2+(n-k)\cdot4k^2\tau^2=k^2(-8\sqrt{n+1}\tau+4n\tau^2)+4k(n+1)=$$
$$=-4k^2+4k(n+1)=4k(n+1-k).$$ 
This gives
$$
\|P\|_B=
\max\limits_{1\leq k\leq \frac{n+1}{2}} \left[
\sqrt{\frac{n}{n+1}} 
 \left(\frac{1}{n+1}W\right)^{1/2}
+\frac{n+1-2k}{n+1}\right]=$$
$$=\max\limits_{1\leq k\leq \frac{n+1}{2}} \left[
\frac{2\sqrt{n}}{n+1} 
\bigl(k(n+1-k)\bigr)^{1/2}+1-\frac{2k}{n+1}\right].$$
Recalling (\ref{psi_function_modulus}), we obtain
\begin{equation}\label{norm_P_eq_max_psi}
\|P\|_B=
\max\limits_{1\leq k\leq \frac{n+1}{2}} \psi(k).
\end{equation}
It remains to show that the last maximum is equal to the largest of numbers
$\psi(a)$ and $\psi(a+1)$, where 
$a=\left\lfloor\frac{n+1}{2}-\frac{\sqrt{n+1}}{2}\right\rfloor.$ 
To do this, we analyze the behavior of $\psi(t)$ over the whole interval $ [0, n + 1] $.

The function
$$\psi(t)=\frac{2\sqrt{n}}{n+1}\Bigl(t(n+1-t)\Bigr)^{1/2}+
\left|1-\frac{2t}{n+1}\right|, \quad 0\leq t\leq n+1,$$
has following properties:
$\psi(t)>0$,
$\psi(0)=\psi(n+1)=1,$ 
$\psi\left(\frac{n+1}{2}\right)=\sqrt{n}$,
$\psi(n+1-t)=\psi(t)$. 
The graph of $\psi(t)$ is symmetric with respect to the straight line $t=\frac{n+1}{2}$.
On each half of $[0,n+1]$ function
$\psi(t)$ is concave as a sum of two concave functions.
Indeed, for $0\leq t\leq \frac{n+1}{2}$
$$\psi(t)=\varphi_1(t)+\varphi_2(t), \quad
\varphi_1(t):=\frac{2\sqrt{n}}{n+1}\Bigl(t(n+1-t)\Bigr)^{1/2}, \quad
\varphi_2(t):=1-\frac{2t}{n+1},$$
where $\varphi_1(t)$ is concave as a superposition of the concave function 
$t(n+1-t)$ and the increasing concave function $\sqrt{t}$, while 
$\varphi_2(t)$ is a linear function. 
The derivation $\psi^\prime(t)$ is equal to zero only in two points
$$t_-:=\frac{n+1}{2}-\frac{\sqrt{n+1}}{2}, \quad  
t_+:=\frac{n+1}{2}+\frac{\sqrt{n+1}}{2}.$$  

\begin{figure}[h!]
 \centering
\includegraphics[width=16cm]{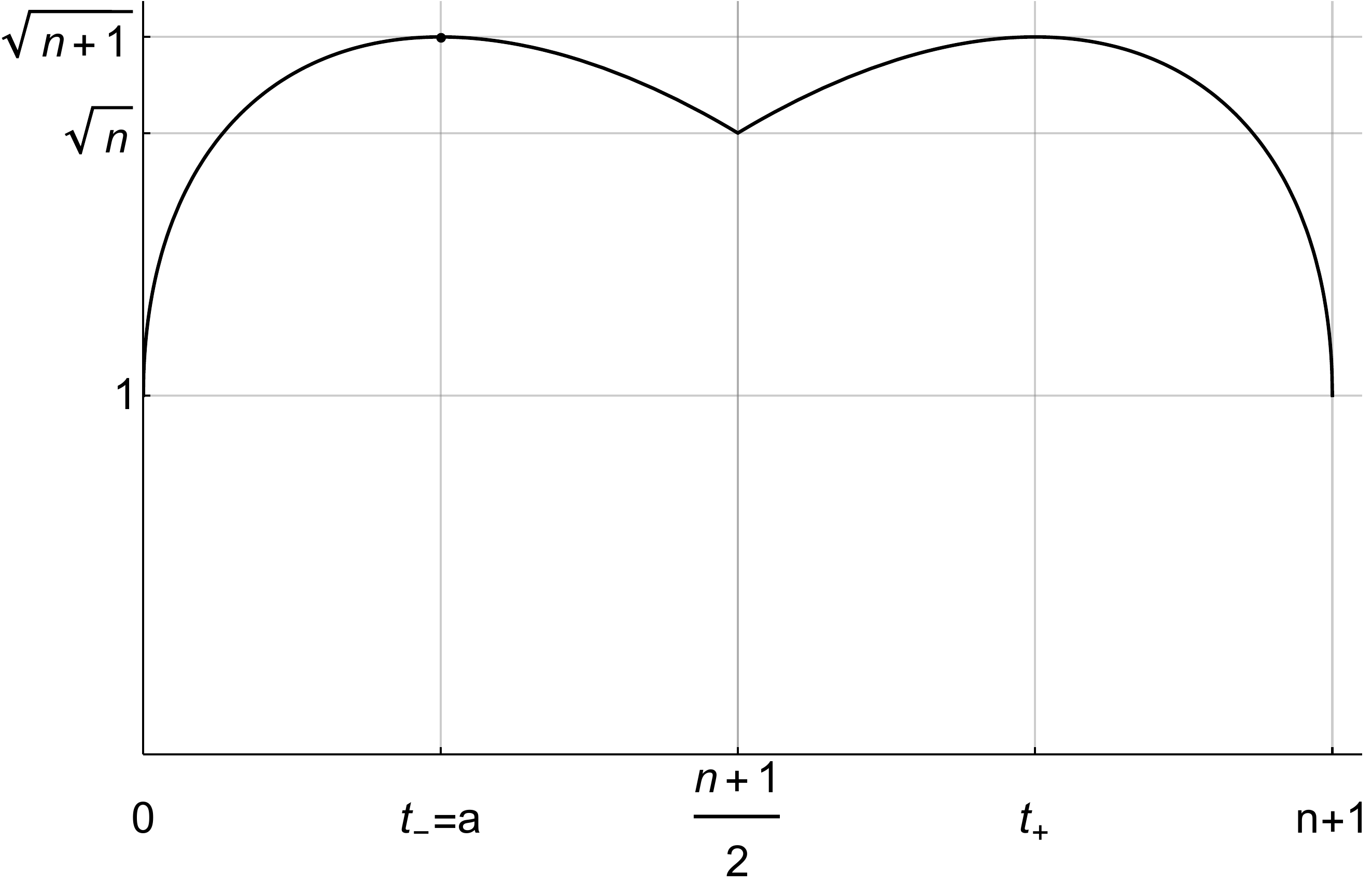}
\captionsetup{justification=centering}
\caption{Graph of $\psi(t)$ for $n=3$. Here $n+1=4$, $t_-=a=1$}
\label{fig:nev_ukl_n3}
\end{figure}
\begin{figure}[h!]
 \centering
\includegraphics[width=16cm]{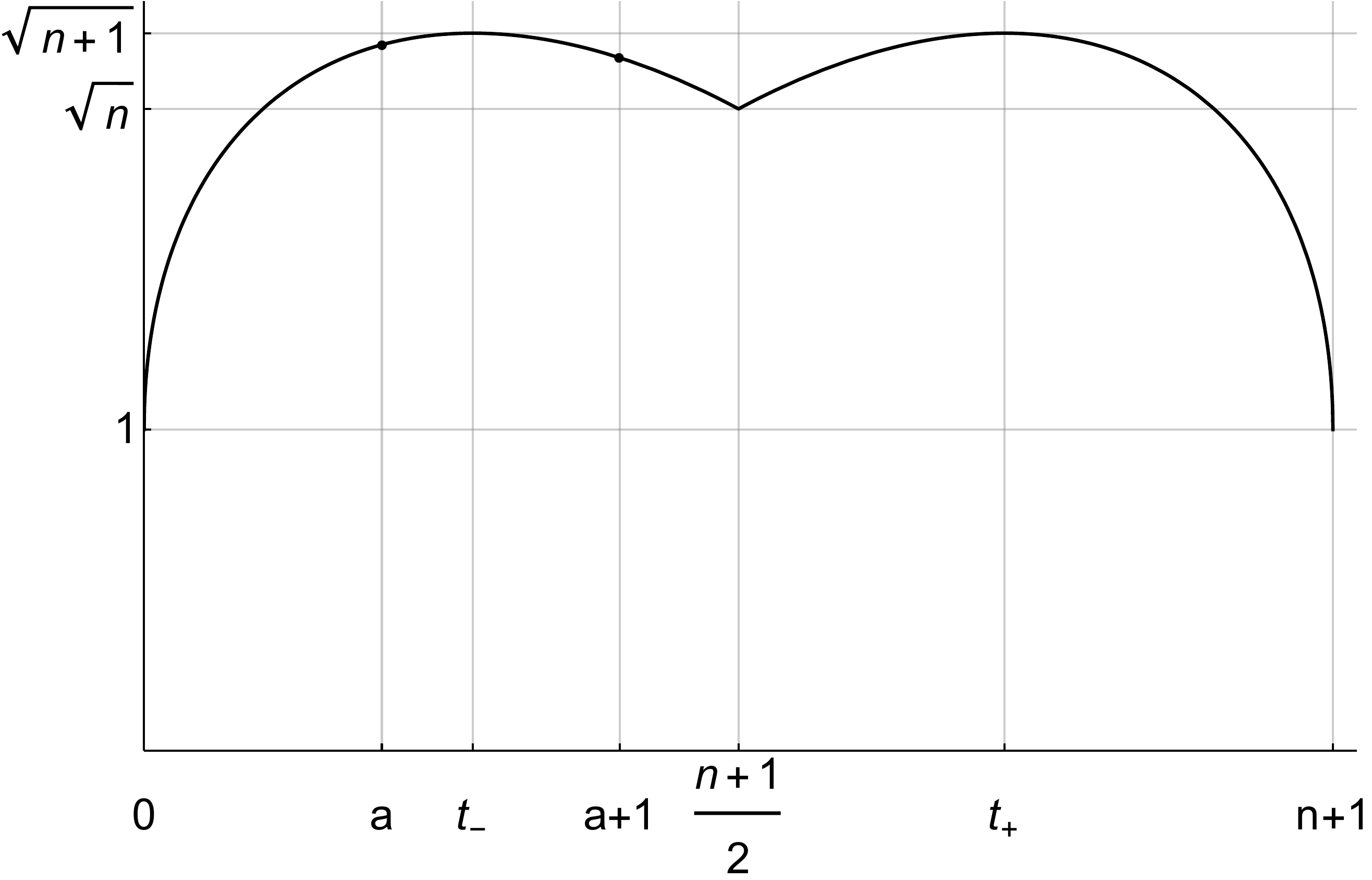}
\captionsetup{justification=centering}
\caption{Graph of $\psi(t)$ for $n=4$.  Here $n+1=5$, $a=1$, $t_-= \frac{ 5-\sqrt{5}}{2}$}
\label{fig:nev_ukl_n4}
\end{figure}
\begin{figure}[h!]
 \centering
\includegraphics[width=16cm]{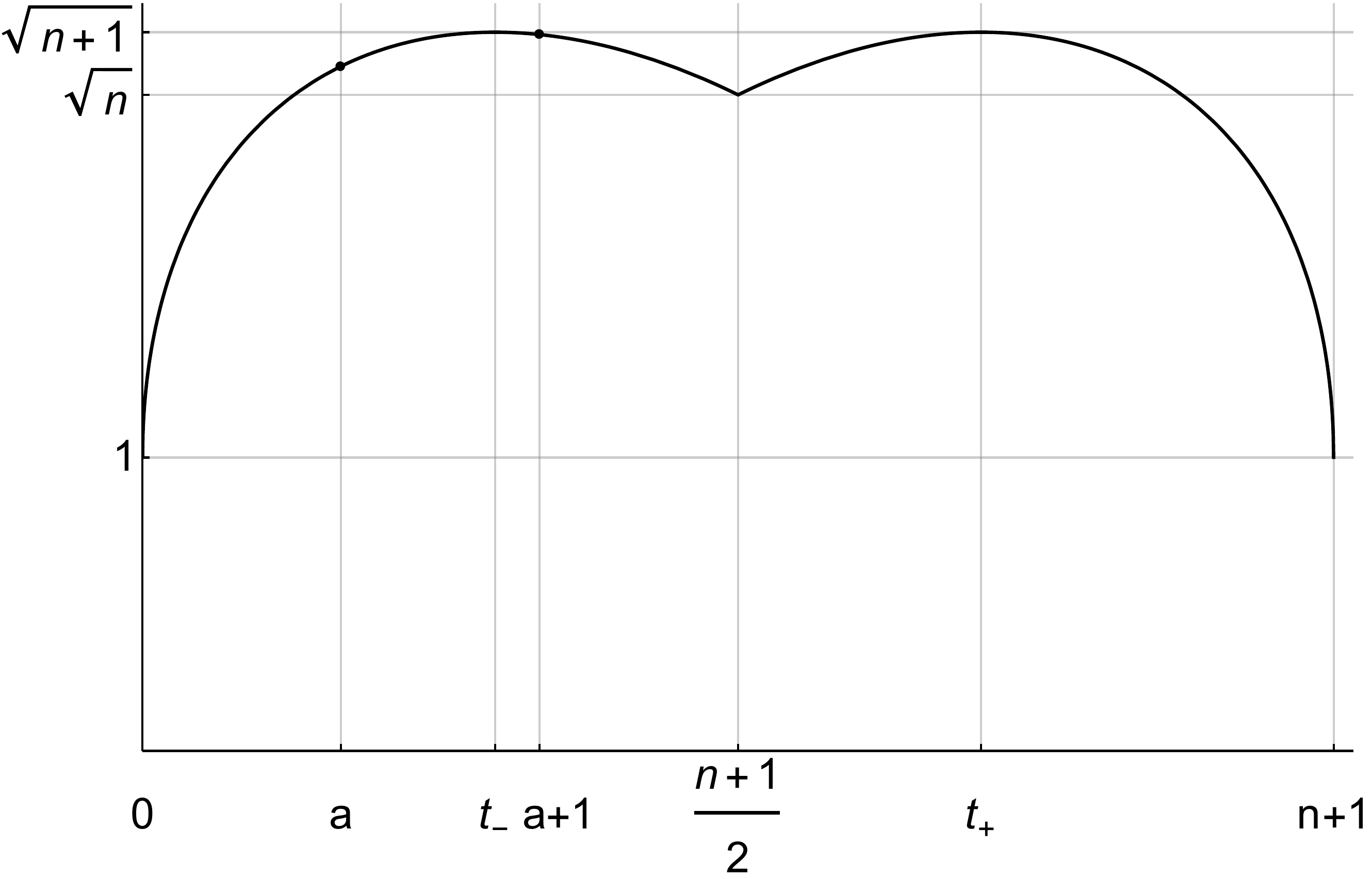}
\captionsetup{justification=centering}
\caption{Graph of  $\psi(t)$ for $n=5$. Here $n+1=5$, $a=1$, $t_-= \frac{6-\sqrt{6}}{2}$}
\label{fig:nev_ukl_n5}
\end{figure}
\begin{figure}[h!]
 \centering
\includegraphics[width=16cm]{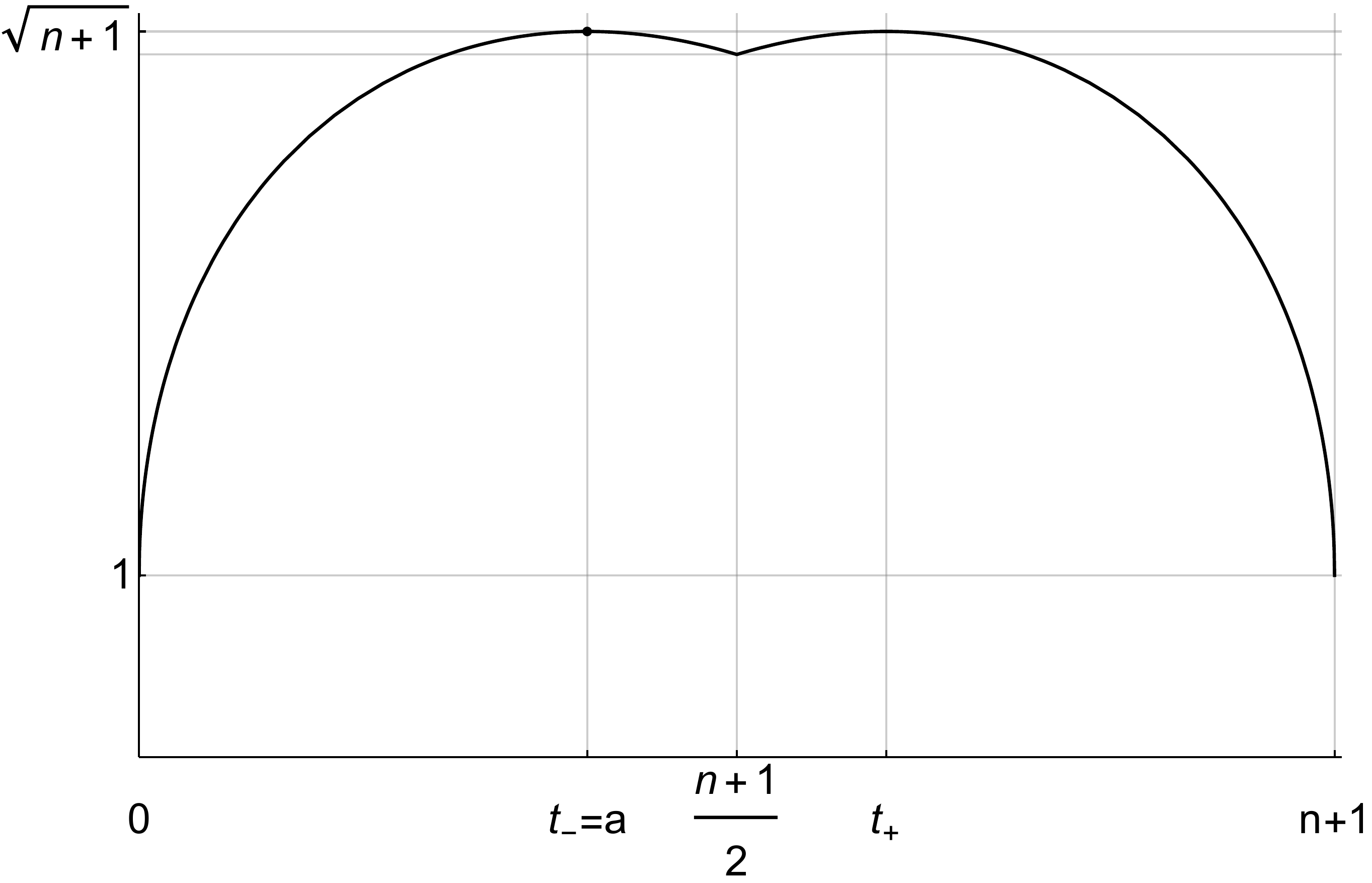}
\captionsetup{justification=centering}
\caption{Graph of $\psi(t)$ for $n=15$. Here $n+1=16$, $t_-=a=6$}
\label{fig:nev_ukl_n15}
\end{figure}

These points lie strictly inside in
$\left[0,\frac{n+1}{2}\right]$ and
$\left[\frac{n+1}{2},n+1\right]$ correspondingly.  
From the concavity of $\psi(t)$ on each of these segments,
it follows that
$$\max_{0\leq \psi(t)\leq n+1} \psi(t)=\psi(t_-)=\psi(t_+)=\sqrt{n+1}.$$
Moreover, $ t_-$ and $t _+ $ are the only maximum points on the left and right halves of $[0,n+1]$.
Hence, $\psi(t)$ increases for $0\leq t\leq t_-$ and decreases for $t_-\leq t\leq \frac{n+1}{2}$. 
On the left half $\psi(t)$ behaves symmetrically: it increases for  $\frac{n+1}{2}\leq t\leq t_+$
and decreases for $t_+\leq t\leq n+1.$

Let us now consider only the segment $\left[0,\frac{n+1}{2}\right]$.
Since $a=\left\lfloor t_-\right\rfloor$, then always $a\leq t_-<a+1$. 
Let $k$ be a whole number, $1\leq k\leq \frac{n+1}{2}$. 
If $k<a$, then $\psi(k)<\psi(a)$, while if  $k>a+1$, then $\psi(k)<\psi(a+1)$. 
Taking into account
(\ref{norm_P_eq_max_psi}), 
we have
$$\|P\|_B=
\max\limits_{1\leq k\leq \frac{n+1}{2}} \psi(k)=\max \{\psi(a),\psi(a+1)\}.
$$
The equality (\ref{norm_P_reg_formula}) is proved.  

We now turn to inequalities (\ref{norm_P_reg_ineqs}). We have already obtained the right one:
$$\|P\|_B=\max_{k\in {\mathbb Z}: \,1\leq k\leq\frac{n+1}{2}} \psi(k) \leq
\max_{1\leq t\leq\frac{n+1}{2}} \psi(t)=
\psi(t_-)=\sqrt{n+1}.$$
Let us describe such dimensions $n$ that $\|P\|_B=\sqrt{n+1}$.
These dimensions 
are characterized by the fact that the non-strict inequality in the last relation
becomes an equality, i.\,e., the number $t_-$ is integer.

Note that both $t_-$ and $t_+$  are integer if and only if $\sqrt{n+1}$  is an integer. 
Assume for example 
$$t_-:=\frac{n+1}{2}-\frac{\sqrt{n+1}}{2}=d\in {\mathbb Z}.$$
Then $\sqrt{n+1}=n+1-2d$ is an integer. On the contrary, if $\sqrt{n+1}=m\in{\mathbb Z}$,
then $m(m-1)$ is an even number and $t_-=\frac{m(m-1)}{2}$ is an integer. 
The statement for  $t_+$ can be proved similarly. Also it can be deduced from the previous statement, since $t_+-t_-=\sqrt{n+1}$.

We have obtained that for any dimension of the form  $n=m^2-1$, $m$
 is an integer, and only in these situations, the number $t_-$
is integer.  Consequently,
$$\|P\|_B=\max_{k\in {\mathbb Z}: \,1\leq k\leq\frac{n+1}{2}} \psi(k)
=\max_{1\leq t\leq\frac{n+1}{2}} \psi(t)=
\psi(t_-)=\sqrt{n+1}=m.$$
These equalities are equivalent to (\ref{norm_P_reg_formula}), 
since in the cases considered
$a=\left\lfloor t_-\right\rfloor=t_-$ and $\|P\|_B=\psi(a)$.
For all other $n$, holds $\|P\|<\sqrt{n+1}.$ 
Indeed, if $n \neq m^2-1$, then $a<t_-<a+1$  and the maximum in
(\ref{norm_P_eq_max_psi}) is reached either for $k=a$ or for $k=a+1$.
For any $n\ne m^2-1$ the norm of $P$, i.e., the maximum $\psi(k)$ for integer 
$k\in \left[1,\frac{n+1}{2}\right]$, is strictly less than maximum of $\psi(t)$ over all this interval.

It remains to show that always $\|P\|_B\geq \sqrt{n}$ and for $n>1$ 
this equality is a strict one.
If $n>3$, then $\sqrt{n+1}>2$, hence,
$$a+1=
\left\lfloor\frac{n+1}{2}-\frac{\sqrt{n+1}}{2}\right\rfloor+1\leq
\frac{n+1}{2}-\frac{\sqrt{n+1}}{2}+1<\frac{n+1}{2}.$$
From (\ref{norm_P_reg_formula}) and properties of $\psi(t)$, we get
$$\|P\|_B=\max \{\psi(a),\psi(a+1)\}\geq \psi(a+1)>\psi\left(\frac{n+1}{2}\right)=
\sqrt{n} \qquad(n>3).$$
For $n=2$ and $n=3$ also holds $\|P\|_B>\sqrt{n}$. 
Therefore, $\|P\|=\sqrt{n}$ only for $n=1$.
The theorem is proved.
\end{proof}

If $n+1$  is an Hadamard number, i.\,e., 
there exists an Hadamard matrix of order $n+1$ 
(
see [11], [12]), 
the estimate $\|P\|_B\geq c\sqrt{n}$  can 
be obtained from previous results of the first author. 
In 2006, he have proved that for all $n$
$$\theta_n(Q)\geq\chi_n^{-1}\left(\frac{1}{\nu_n}\right)>
\frac{1}{e}\sqrt{n-1}.
$$
Here $Q$ is an $n$-dimensional cube, 
$\chi_n$ is the normalized Legendre polynomial of degree $n$ and 
$\nu_n$ is the maximum volume of a simplex contained in the unit cube $Q_n=[0,1]^n$.
For references, definitions and proofs, see [4].
Suppose additionally that $n+1$ is an Hadamard number. 
Then there exists a regular simplex with vertices in the vertices of a cube (see [12]). 
Let $Q$ be a cube inscribed in the Euclidean ball $B$.
Then this regular simplex will be inscribed also in the ball.
Since $Q\subset B$, for the corresponding projector $P$ we have
$$\|P\|_B\geq \|P\|_Q\geq
\theta_n(Q)\geq\chi_n^{-1}\left(\frac{1}{\nu_n}\right)>
\frac{1}{e}\sqrt{n-1}.
$$

\section{The exact values $\theta_n(B_n)$ for $1\leq n\leq 4$}\label{nev_s5}

Utilizing Theorem \ref{norm_P_regular} and properties noted in Introduction, it is possible
to obtain the exact values of $\theta_n(B_n)$ and to describe
minimal projectors for $n=1,2,3,4$.
Of course, $\theta_1 (B_1) = 1 $, but this case also fits into the general scheme.

\begin{theor}\label{theta_1_2_3_4_theorem} The following equalities hold:
\begin{equation}\label{theta_1234_equalities}
\theta_1(B_1)=1, \quad
\theta_2(B_2)=\frac{5}{3}, \quad
\theta_3(B_3)=2, \quad
\theta_4(B_4)=\frac{11}{5}.
\end{equation}
For $1\leq n\leq 4$ the equality $\|P\|_{B_n}=\theta_n(B_n)$ holds only for projectors
corresponding to regular simpleces inscribed into $B_n$.
\end{theor}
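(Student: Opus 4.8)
The plan is to recognize first that the four target numbers are precisely the values of $3-\frac{4}{n+1}$: indeed $3-\frac{4}{2}=1$, $3-\frac{4}{3}=\frac{5}{3}$, $3-\frac{4}{4}=2$, and $3-\frac{4}{5}=\frac{11}{5}$. Hence assertion (\ref{theta_1234_equalities}) is equivalent to $\theta_n(B_n)=3-\frac{4}{n+1}$ for $1\le n\le 4$. The lower bound $\theta_n(B_n)\ge 3-\frac{4}{n+1}$ requires nothing new: it is exactly (\ref{3_minus_frac_bound_for_P}), which was derived for an arbitrary interpolation projector on $B_n$. So the whole content of the numerical part is the matching upper bound.

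For the upper bound I would exhibit a projector attaining the value, namely one whose simplex is regular and inscribed into $B_n$, and evaluate its norm with Theorem \ref{norm_P_regular}. A direct substitution into (\ref{psi_function_modulus}) gives $\psi(1)=\frac{2\sqrt n}{n+1}\sqrt n+\frac{n-1}{n+1}=\frac{3n-1}{n+1}=3-\frac{4}{n+1}$. It then remains to check, for $1\le n\le 4$, that the maximum in (\ref{norm_P_reg_formula}) is attained at $k=1$, i.e.\ that $1\in\{a,a+1\}$ and that $\psi(1)$ dominates the competing value among $\psi(a),\psi(a+1)$; a short computation of $a=\lfloor\frac{n+1}{2}-\frac{\sqrt{n+1}}{2}\rfloor$ (giving $a=0$ for $n=1,2$ and $a=1$ for $n=3,4$), together with the value of the single competitor $\psi(0)=1$ or $\psi(2)$, settles this. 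Consequently $\|P\|_{B_n}=3-\frac{4}{n+1}$ for the inscribed regular simplex, whence $\theta_n(B_n)\le 3-\frac{4}{n+1}$, and combined with the lower bound the equalities follow. This is also where the restriction $n\le 4$ is forced: for $n\ge 5$ the maximizing integer $k$ moves away from $1$ and $\max\{\psi(a),\psi(a+1)\}$ strictly exceeds $\psi(1)$, so the inscribed regular simplex no longer attains $3-\frac{4}{n+1}$.

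For the uniqueness statement I would argue directly from (\ref{nev_ksi_P_ineq}). Let $P$ be any projector with $\|P\|_{B_n}=\theta_n(B_n)=3-\frac{4}{n+1}$ and let $S$ be its simplex. Substituting this value into the right-hand inequality of (\ref{nev_ksi_P_ineq}) collapses the bound to $\xi(B_n;S)\le\frac{n+1}{2}\bigl(2-\frac{4}{n+1}\bigr)+1=n$. Since every simplex with vertices in $B_n$ satisfies $\xi(B_n;S)\ge n$, with equality only for a regular simplex inscribed into the bounding sphere (the result of [10] quoted in the Introduction), we obtain $\xi(B_n;S)=n$ and therefore that $S$ is regular and inscribed. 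Conversely, by the previous paragraph every regular inscribed simplex does attain $\theta_n(B_n)$, so the minimizers are exactly the projectors with such simplices.

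I expect the main obstacle to be the upper-bound step, specifically the verification that $k=1$ is the maximizer of (\ref{norm_P_reg_formula}) precisely in the range $1\le n\le 4$. This threshold is the analytic reflection of the existence of a $1$-point of the ball with respect to the inscribed regular simplex, and pinning down the transition at $n=5$ (where $\psi(2)$ overtakes $\psi(1)$) is the only genuinely case-sensitive computation; the lower bound and the uniqueness are immediate consequences of the already-established inequalities (\ref{3_minus_frac_bound_for_P}) and (\ref{nev_ksi_P_ineq}).
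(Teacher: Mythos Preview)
Your proposal is correct and follows essentially the same route as the paper: compute the norm of the projector for an inscribed regular simplex via Theorem~\ref{norm_P_regular}, observe that the result equals $3-\frac{4}{n+1}$ for $1\le n\le 4$, match this against the universal lower bound (\ref{3_minus_frac_bound_for_P}), and deduce uniqueness from the right-hand inequality in (\ref{nev_ksi_P_ineq}) together with the characterisation $\xi(B_n;S)=n\Leftrightarrow S$ regular inscribed. The one cosmetic difference is that you compute $\psi(1)=\frac{3n-1}{n+1}=3-\frac{4}{n+1}$ once in closed form and then verify that $1\in\{a,a+1\}$ and that $\psi(1)$ dominates the competing value, whereas the paper evaluates $\psi(a)$ and $\psi(a+1)$ numerically in each of the four cases and only afterwards notes the common pattern; your packaging is slightly tidier but the content is identical.
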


\begin{proof}[Proof] 
Let us calculate the norm of the projector $P:C(B_n)\to \Pi_1({\mathbb R}^n)$
corresponding to a regular simplex inscribed into $B_n$.
By Theorem \ref{norm_P_regular},
$$
\|P\|_{B_n}=\max\{\psi(a),\psi(a+1)\},
$$  
where $\psi(t)$ is defined by (\ref{psi_function_modulus}) and 
$a=\left\lfloor\frac{n+1}{2}-\frac{\sqrt{n+1}}{2}\right\rfloor.$
For $n=1$ we have
$\psi(t)=\sqrt{t(2-t)}+|1-t|$, $a=0$, $\psi(a)=\psi(a+1)=1$ and
$\|P\|_{B_1}=1$.
For $n=2$ holds true 
$\psi(t)=\frac{2\sqrt{2}}{3}\sqrt{t(3-t)}+\left|1-\frac{2t}{3}\right|$, 
$a=0$, $\psi(a)=1$, $\psi(a+1)=\frac{5}{3}$ and
$\|P\|_{B_2}=\psi(a+1)=\frac{5}{3}$.
If  $n=3$, then
$\psi(t)=\frac{\sqrt{3}}{2}\sqrt{t(4-t)}+\left|1-\frac{t}{2}\right|$, 
$a=1$, $\psi(a)=\psi(a+1)=2$, hence, 
$\|P\|_{B_3}=2$.
Finally, for $ n = 4 $, we have to take
$\psi(t)=\frac{4}{5}\sqrt{t(5-t)}+\left|1-\frac{2t}{5}\right|$, 
$a=1$, $\psi(a)=\frac{11}{5}$, $\psi(a+1)=\frac{1}{5}\bigl(1+4\sqrt{6}\bigr)$, so,
$\|P\|_{B_4}=\psi(a)=\frac{11}{5}$.

Now note that for each $n=1, 2, 3, 4$ holds
$$\|P\|_{B_n}=3-\frac{4}{n+1}.$$
For any dimension, $\theta_n(B_n)\geq 3-\frac{4}{n+1}$. Therefore, if $1\leq n\leq 4$, then
\begin{equation}\label{teta_eq_3_minus_frac}
\theta_n(B_n)= 3-\frac{4}{n+1}.
\end{equation}
This is equivalent to (\ref{theta_1234_equalities}).
As it was noted in Introduction, if (\ref{teta_eq_3_minus_frac}) is true, then any projector having 
minimum norm corresponds to a regular simplex inscribed into $B_n$.
This completes the proof.
\end{proof}

\section{Concluding remarks}\label{nev_s6}

The inequality (\ref{norm_P_reg_ineqs}) of Theorem  \ref{norm_P_regular} 
 implies that $\theta_n(B_n)$ (the minimum norm of the projector 
for linear interpolation on the unit ball $\|x\|\leq 1$) satisfies the inequality
 \begin{equation}\label{theta_n_B_n_leq_sqrt_n_plus_1}
\theta_n(B_n)\leq \sqrt{n+1}.
\end{equation}
At least for those $n$, when  $\sqrt{n+1}$ is not integer,
i.\,e. $n\ne m^2-1$, this inequality is strict.
If a projector haves the minimal norm and its nodes are the vertices 
of a regular simplex inscribed into the sphere $\|x\|=1$, then 
\begin{equation}\label{theta_n_B_n_equality_psi_a}
\theta_n(B_n)=\max\{\psi(a),\psi(a+1)\}.
\end{equation}
The equality (\ref{theta_n_B_n_equality_psi_a}) holds, for example, 
for $1\leq n\leq 4$, when it is equivalent to (\ref{teta_eq_3_minus_frac}) 
(see Section~5).

Questions concerning accuracy of (\ref{theta_n_B_n_leq_sqrt_n_plus_1})
and correctness of (\ref{theta_n_B_n_equality_psi_a}) for $n>4$
will be discussed in a further paper.

\begin{figure}[h!]
 \centering
\includegraphics[width=15.5cm]{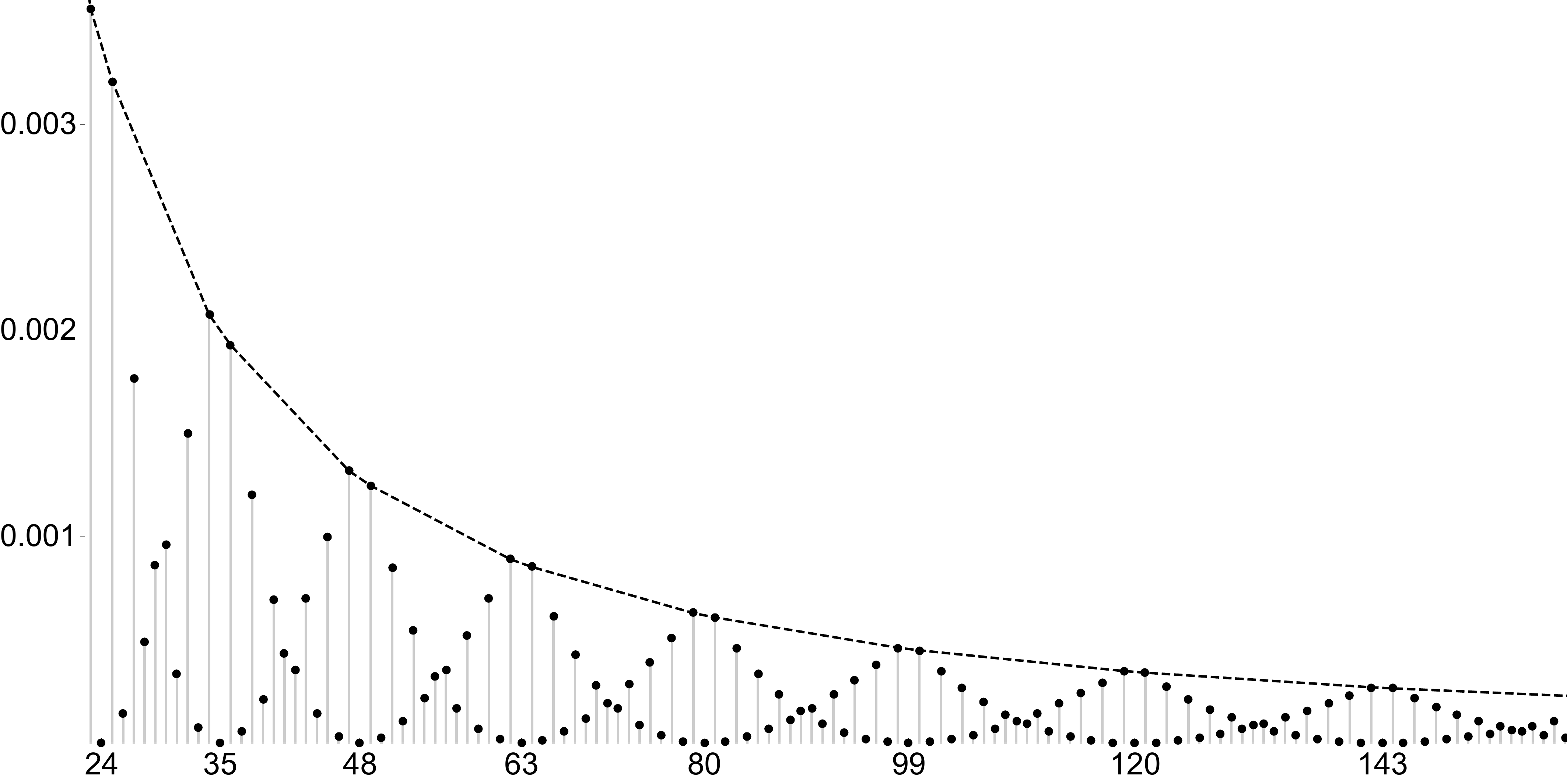}
\captionsetup{justification=centering}
\caption{The numbers $d_n=\sqrt{n+1}- \|P\|_{B_n}$ for $23\leq n\leq 160$}
\label{fig:nev_ukl_graph_diff}
\end{figure}

\begin{figure}[h!]
 \centering
\includegraphics[width=15.5cm]{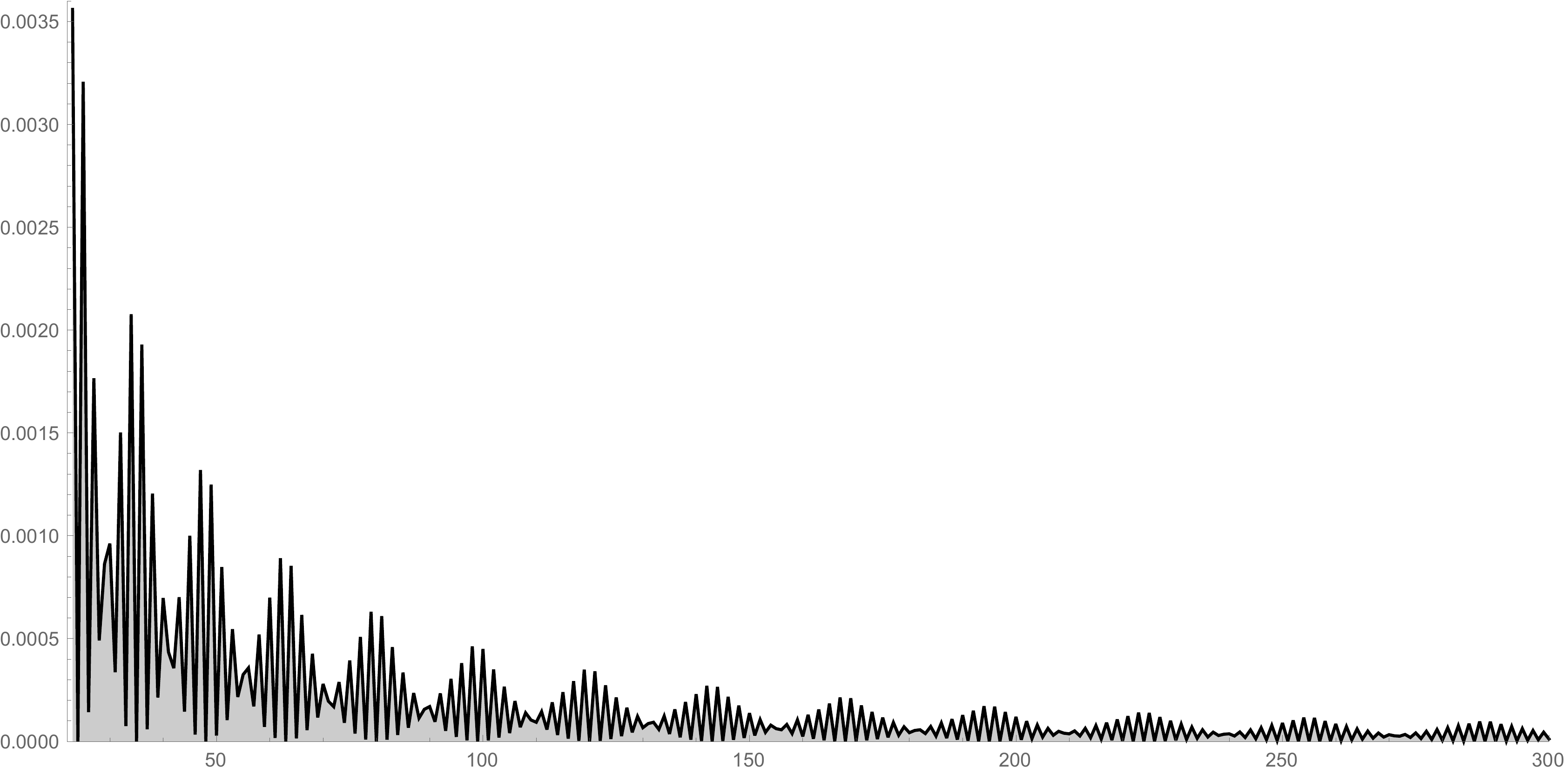}
\captionsetup{justification=centering}
\caption{The numbers $d_n=\sqrt{n+1}- \|P\|_{B_n}$ for $23\leq n\leq 300$}
\label{fig:nev_ukl_graph_diff_long}
\end{figure}

\begin{table}[h!]
\begin{center}
\captionsetup{justification=centering}
\caption{Norm of $P$ for a regular simplex inscribed into $B_n$}
\label{tab:nev_uhl_norm_est}
\bgroup
$
\def\arraystretch{1.7}
\begin{array}{|c|c|c|c|c|c|c|c|c|}
\hline
n & t_- & a & a+1 & \psi(a) & \psi(a+1) & k^* & \| P\|_{B_n}  & \| P\|_{B_n}  \\
\hline
 1 & 1-\frac{1}{\sqrt{2}} & 0 & 1 & 1 & 1 & 1 & 1 & 1 \\
 \hline
 2 & \frac{3-\sqrt{3}}{2}  & 0 & 1 & 1 & \frac{5}{3} & 1 & \frac{5}{3} & 1.6666\ldots \\
 \hline
 3 & 1 & 1 & 2 & 2 & \sqrt{3} & 1 & 2 & 2 \\
 \hline
 4 & \frac{ 5-\sqrt{5}}{2} & 1 & 2 & \frac{11}{5} & \frac{1+4 \sqrt{6}}{5}  & 1 & \frac{11}{5} & 2.2 \\
 \hline
 5 & 3-\sqrt{\frac{3}{2}} & 1 & 2 & \frac{7}{3} & \frac{1+2 \sqrt{10}}{3}  & 2 & \frac{1+2 \sqrt{10}}{3}  & 2.4415\ldots \\
 \hline
 6 & \frac{7-\sqrt{7}}{2}  & 2 & 3 & \frac{3+4 \sqrt{15}}{7}  & \frac{ 1+12 \sqrt{2}}{7} & 2 & 
 \frac{3+4 \sqrt{15}}{7}  & 2.6417\ldots \\
 \hline
 7 & 4-\sqrt{2} & 2 & 3 & \frac{1+\sqrt{21}}{2}  & \frac{1+\sqrt{105}}{4}  & 3 & \frac{1+\sqrt{105}}{4}  & 2.8117\ldots \\
 \hline
 8 & 3 & 3 & 4 & 3 & \frac{1+8 \sqrt{10}}{9}  & 3 & 3 & 3 \\
 \hline
 9 & 5-\sqrt{\frac{5}{2}} & 3 & 4 & \frac{2+3 \sqrt{21}}{5}  & \frac{1+6 \sqrt{6}}{5}  & 3 & \frac{2+3 \sqrt{21}}{5}  & 3.1495\ldots \\
 \hline
 10 & \frac{11-\sqrt{11}}{2}  & 3 & 4 & \frac{ 5+8 \sqrt{15}}{11} & \frac{ 3+4 \sqrt{70}}{11} & 4 & 
 \frac{3+4 \sqrt{70}}{11}  & 3.3151\ldots \\
 \hline
 11 & 6-\sqrt{3} & 4 & 5 & \frac{ 1+2 \sqrt{22}}{3} & \frac{ 1+\sqrt{385}}{6} & 4 & \frac{1+2 \sqrt{22}}{3}  & 3.4602\ldots \\
 \hline
 12 & \frac{13-\sqrt{13}}{2}  & 4 & 5 & \frac{5+24 \sqrt{3}}{13}  & \frac{3+8 \sqrt{30}}{13}  & 5 & 
 \frac{3+8 \sqrt{30}}{13}  & 3.6013\ldots \\
 \hline
 13 & 7-\sqrt{\frac{7}{2}} & 5 & 6 & \frac{2+3 \sqrt{65}}{7}  & \frac{1+4 \sqrt{39}}{7}  & 5 & 
 \frac{2+3 \sqrt{65}}{7}  & 3.7409\ldots \\
 \hline
 14 & \frac{15-\sqrt{15}}{2}  & 5 & 6 & \frac{1+4 \sqrt{7}}{3}  & \frac{1+4 \sqrt{21}}{5}  & 6 & 
 \frac{1+4 \sqrt{21}}{5}  & 3.8660\ldots \\
 \hline
 15 & 6 & 6 & 7 & 4 & \frac{1+3 \sqrt{105}}{8}  & 6 & 4 & 4 \\
 \hline
 50 & \frac{51-\sqrt{51}}{2}  & 21 & 22 & \frac{3+20 \sqrt{35}}{17}  & \frac{7+20 \sqrt{319}}{51}  & 22 & 
 \frac{7+20 \sqrt{319}}{51}  & 7.1414\ldots \\
 \hline
 100 & \frac{101-\sqrt{101}}{2}  & 45 & 46 & \frac{11+120 \sqrt{70}}{101}  & \frac{ 9+20 \sqrt{2530}}{101} & 45 & \frac{11+120 \sqrt{70}}{101}  & 10.0494\ldots \\
 \hline
 1000 & \frac{1001-\sqrt{1001}}{2}  & 484 & 485 & \frac{3+40 \sqrt{5170}}{91}  & \frac{31+200 \sqrt{25026}}{1001} & 485 & \frac{31+200 \sqrt{25026}}{1001} & 31.6385\ldots \\
 \hline
\end{array}
$
\egroup
\end{center}
\end{table}

In conclusion, we present some illustrations and the results of numerical analysis. 
Graphs of the function 
$$ \psi(t)=\frac{2\sqrt{n}}{n+1}\Bigl(t(n+1-t)\Bigr)^{1/2}+
\left|1-\frac{2t}{n+1}\right|, \quad  0\leq t\leq n+1,$$
for $n=3, 4, 5, 15$ are shown in Fig.~1--4.
The maximum points of of this function 
$t_-:=\frac{n+1}{2}-\frac{\sqrt{n+1}}{2},$  
$t_+:=\frac{n+1}{2}+\frac{\sqrt{n+1}}{2}$ 
are marked.
Points 
$a=\left\lfloor\frac{n+1}{2}-\frac{\sqrt{n+1}}{2}\right\rfloor$
and
$a+1$
are denoted. One of these two points maximizes 
$\psi(k)$ for integer $1\leq k\leq \frac{n+1}{2}$.
In~Fig.~5--6 the values of the difference $d_n=\sqrt{n+1}-\|P\|_{B_n}$ are presented
for $n>23$. Here $P$ is an interpolation projector corresponding to a regular simplex inscribed
into~$B_n$.
As we have established, $\|P\|_{B_n}\leq\sqrt{n+1}$ with the equality 
only for $n=3, 8, 15, 24, 35, 48, 63, 80,$ $\ldots$, i.\,e., for
dimensions of the form $ m ^ 2-1 $. 
It is at these points that $d_n=0$, as it can be seen in Fig.~5.
The dashed line denotes the graph of the linear interpolation spline 
$l$ constructed by the nodes $n=m^2-2,$ $n=m^2$
and by the values of $d_n$ in these nodes.
Always $d_n\leq l(n)$ and the equality is achieved only for $n=m^2-2$ and $n=m^2$.
The function $l(n)$ decreases and $l(n)\to 0$ as $n\to\infty$.
Thus, the following two-side estimate takes place:
$\sqrt{n+1}-l(n)\leq  \|P\|_{B_n} \leq \sqrt{n+1}$.
Both the right and the left inequalities turn into equalities for the infinite set of $n$'s.
This estimate is more accurate than (\ref{norm_P_reg_ineqs}).

Table 1 presents the values of $\|P\|_{B_n}$ for a regular simplex inscribed into the ball.
We use notations from Section~\ref{nev_s4}
The value $k^*$ is equal to the number of $-1$'s in the extremal set of
$f_j$ corresponding to the maximum in (\ref{P_norm_f_j_long}).
Since this maximum is equal to $\max\{\psi(a),\psi(a+1)\}$, the integer
$k^*$ is equal to that of numbers $a$ or $a+1$, for which $\psi(t)$
takes a larger value (see the proof of Theorem \ref{norm_P_regular}).
If $k^*=1$, then in $B_n$ there is an $1$-point with respect to 
the simplex $S$ corresponding to $P$.
For such $ n $,
\begin{equation}\label{nev_ksi_P_eq_regular}
\xi(B_n;S)=
\frac{n+1}{2}\Bigl( \|P\|_{B_n}-1\Bigr)+1
\end{equation}
(see (\ref{nev_ksi_P_ineq})).
Since $S$ is a regular simplex inscribed into $B_n$,  then
$\xi(B_n;S)=n$  and (\ref{nev_ksi_P_eq_regular})
is equivalent to $\|P\|_{B_n}=3-\frac{4}{n+1}$. 
However, starting from $n=5$, always $k^*>1$. 
Moreover, $k^*$  increases along with $n$.
This corresponds to the fact that (\ref{nev_ksi_P_eq_regular}) and the equality
$\|P\|_{B_n}=3-\frac{4}{n+1}$ take place only for $1\leq n\leq 4$.
Initially, this effect was discovered in the course of computer experiments. 
Later the analytical solution of the problem was  found.

If $n+1$ is an Hadamard number, the validity of equality similar to 
(\ref{nev_ksi_P_eq_regular}) can also be considered for a cube and 
the inscribed regular simplex.
It is interesting to note that an $1$-point of $Q_n$ with respect to such simplex
$S$ exists not only for $n=1$ and $n=3$, but also for $n=7$.
In the specified cases
$$
\xi(Q_n;S)=\frac{n+1}{2}\Bigl( \|P\|_{Q_n}-1\Bigr)+1,
$$
therefore, $\theta_3=3$ and $\theta_7=\frac{5}{2}$. 
This theme is discussed in detail in [4].

\bigskip
\centerline{\bf\Large References}

\begin{itemize}

\item[1.]   
Nevskij~M.\,V. Inequalities for the norms of interpolating projections, 
{\it Model.  Anal. Inform. Sist.}, 2008, vol.~15, no.~3. pp.~28--37 (in~Russian).

\item[2.]
Nevskij~M.\,V.  On a certain relation for the minimal norm of an interpolational projection,
{\it Model.  Anal. Inform. Sist.}, 2009, vol.~16, no.~1, pp.~24--43 (in~Russian).

\item[3.]
Nevskii~M.\,V. On a property of $n$-dimensional simplices,
{\it Math. Notes},   2010, vol.~87, no.~4, pp.~543--555.

\item[4.]
 Nevskii,~M.\,V.,
{\it Geometricheskie ocenki v polinomialnoi interpolyacii} 
(Geometric Estimates in Polynomial Interpolation), Yaroslavl': Yarosl. Gos. Univ., 2012 \linebreak
(in~Russian).

\item[5.]
 Nevskii~M.\,V., Ukhalov A.\,Yu. On numerical charasteristics of a simplex and their estimates,
{\it Model.  Anal. Inform. Sist.}, 2016, vol.~23, no.~5, pp.~603--619 (in~Russian).
English transl.: {\it Aut.
Control Comp. Sci.}, 2017, vol.~51, no.~7, pp.~757--769.

\item[6.]
Nevskii~M.\,V., Ukhalov A.\,Yu. New estimates of numerical values related to a simplex,
{\it Model.  Anal. Inform. Sist.}, 2017, vol.~24, no.~1, pp.~94--110 (in~Russian).
English transl.: {\it Aut.
Control Comp. Sci.}, 2017, vol.~51, no.~7, pp.~770--782.

\item[7.]
Nevskii~M.\,V., Ukhalov A.\,Yu. On $n$-Dimensional Simplices Satisfying Inclusions
$S\subset [0,1]^n\subset nS$,
{\it Model.  Anal. Inform. Sist.}, 2017, vol.~24, no.~5, pp.~578--595 (in~Russian).
English transl.: {\it Aut.
Control Comp. Sci.}, 2018, vol.~52, no.~7, pp.~667--679.

\item[8.]
Nevskii~M.\,V., Ukhalov A.\,Yu. On Minimal  Absorption Index for an $n$-Dimensional Simplex,
{\it Model.  Anal. Inform. Sist.}, 2018, vol.~25, no.~1, pp.~140--150 (in~Russian).
English transl.: {\it Aut.
Control Comp. Sci.}, 2018, vol.~52, no.~7, pp.~680--687.

\item[9.]
Nevskii~M.\,V., Ukhalov A.\,Yu. On optimal interpolation by linear functions on an $n$-dimensional cube,
{\it Model.  Anal. Inform. Sist.}, 2018, vol.~25, no.~3, pp.~291--311 (in~Russian).
English transl.: {\it Aut.
Control Comp. Sci.}, 2018, vol.~52, no.~7, pp.~828--842.

\item[10.]
Nevskii~M.\,V. On some problems for a simplex and a ball in ${\mathbb R}^n$,
{\it Model.  Anal. Inform. Sist.}, 2018, vol.~25, no.~6, pp.~681--692 (in~Russian).

\item[11.]
Hall~M., Jr, {\it Combinatorial theory}, Blaisdall publishing company,
Waltham \linebreak
(Massachusets) -- Toronto -- London, 1967.

\item[12.]
Hudelson,~M., Klee, V., and Larman,~D.,
Largest $j$-simplices in $d$-cubes: some relatives of the
Hadamard maximum determinant problem,
{\it Linear Algebra Appl.}, 1996, vol.~241--243, pp.~519--598.

\item[13.] Nevskii~M., Ukhalov A. Perfect simplices in ${\mathbb R}^5$,
{\it Beitr\"{a}ge zur Algebra und Geometrie~/ Contributions to Algebra and Geometry},
2018, vol.~59, no.~3, pp.~501--521.

\end{itemize}

\end{document}